\documentclass{amsart}
\usepackage{amsfonts}
\usepackage{amsmath}

\setcounter{MaxMatrixCols}{10}

\newtheorem{theorem}{Theorem}
\theoremstyle{plain}

\newtheorem{axiom}{Axiom}

\newtheorem{conjecture}{Conjecture}
\newtheorem{corollary}{Corollary}

\newtheorem{definition}{Definition}
\newtheorem{example}{Example}
\newtheorem{exercise}{Exercise}
\newtheorem{lemma}{Lemma}

\newtheorem{proposition}{Proposition}
\newtheorem{remark}{Remark}

\numberwithin{equation}{section}
\typeout{TCILATEX Macros for Scientific Word 3.0 <19 May 1997>.}
\typeout{NOTICE:  This macro file is NOT proprietary and may be
freely copied and distributed.}
\makeatletter
%
\newcount\@hour\newcount\@minute\chardef\@x10\chardef\@xv60
\def\tcitime{
\def\@time{%
  \@minute\time\@hour\@minute\divide\@hour\@xv
  \ifnum\@hour<\@x 0\fi\the\@hour:%
  \multiply\@hour\@xv\advance\@minute-\@hour
  \ifnum\@minute<\@x 0\fi\the\@minute
  }}%

\@ifundefined{hyperref}{}{}

\@ifundefined{qExtProgCall}{\def\qExtProgCall#1#2#3#4#5#6{\relax}}{}
%
%
%
%
\def\QCTOpt[#1]#2{%
  \def\QCTOptB{#1}
  \def\QCTOptA{#2}
}
\def\QCTNOpt#1{%
  \def\QCTOptA{#1}
  \let\QCTOptB\empty
}
\def\Qct{%
  \@ifnextchar[{%
    \QCTOpt}{\QCTNOpt}
}
\def\QCBOpt[#1]#2{%
  \def\QCBOptB{#1}
  \def\QCBOptA{#2}
}
\def\QCBNOpt#1{%
  \def\QCBOptA{#1}
  \let\QCBOptB\empty
}
\def\Qcb{%
  \@ifnextchar[{%
    \QCBOpt}{\QCBNOpt}
}
\def\PrepCapArgs{%
  \ifx\QCBOptA\empty
    \ifx\QCTOptA\empty
      {}%
    \else
      \ifx\QCTOptB\empty
        {\QCTOptA}%
      \else
        [\QCTOptB]{\QCTOptA}%
      \fi
    \fi
  \else
    \ifx\QCBOptA\empty
      {}%
    \else
      \ifx\QCBOptB\empty
        {\QCBOptA}%
      \else
        [\QCBOptB]{\QCBOptA}%
      \fi
    \fi
  \fi
}
\newcount\GRAPHICSTYPE
\GRAPHICSTYPE=\z@
\def\GRAPHICSPS#1{%
 \ifcase\GRAPHICSTYPE
   \special{ps: #1}%
 \or
   \special{language "PS", include "#1"}%
 \fi
}%
%
%
%
\def\graffile#1#2#3#4{%
    \bgroup
    \leavevmode
    \@ifundefined{bbl@deactivate}{\def~{\string~}}{\activesoff}
    \raise -#4 \BOXTHEFRAME{%
        \hbox to #2{\raise #3\hbox to #2{\null #1\hfil}}}%
    \egroup
}%
%
\def\draftbox#1#2#3#4{%
 \leavevmode\raise -#4 \hbox{%
  \frame{\rlap{\protect\tiny #1}\hbox to #2%
   {\vrule height#3 width\z@ depth\z@\hfil}%
  }%
 }%
}%
\newcount\draft
\draft=\z@

\newif\ifwasdraft
\wasdraftfalse

\def\GRAPHIC#1#2#3#4#5{%
 \ifnum\draft=\@ne\draftbox{#2}{#3}{#4}{#5}%
  \else\graffile{#1}{#3}{#4}{#5}%
  \fi
 }%
\def\addtoLaTeXparams#1{%
    \edef\LaTeXparams{\LaTeXparams #1}}%
%

\newif\ifBoxFrame \BoxFramefalse
\newif\ifOverFrame \OverFramefalse
\newif\ifUnderFrame \UnderFramefalse

\def\BOXTHEFRAME#1{%
   \hbox{%
      \ifBoxFrame
         \frame{#1}%
      \else
         {#1}%
      \fi
   }%
}

\def\doFRAMEparams#1{\BoxFramefalse\OverFramefalse\UnderFramefalse\readFRAMEparams#1\end}%
\def\readFRAMEparams#1{%
 \ifx#1\end%
  \let\next=\relax
  \else
  \ifx#1i\dispkind=\z@\fi
  \ifx#1d\dispkind=\@ne\fi
  \ifx#1f\dispkind=\tw@\fi
  \ifx#1t\addtoLaTeXparams{t}\fi
  \ifx#1b\addtoLaTeXparams{b}\fi
  \ifx#1p\addtoLaTeXparams{p}\fi
  \ifx#1h\addtoLaTeXparams{h}\fi
  \ifx#1X\BoxFrametrue\fi
  \ifx#1O\OverFrametrue\fi
  \ifx#1U\UnderFrametrue\fi
  \ifx#1w
    \ifnum\draft=1\wasdrafttrue\else\wasdraftfalse\fi
    \draft=\@ne
  \fi
  \let\next=\readFRAMEparams
  \fi
 \next
 }%
%

\def\IFRAME#1#2#3#4#5#6{%
      \bgroup
      \let\QCTOptA\empty
      \let\QCTOptB\empty
      \let\QCBOptA\empty
      \let\QCBOptB\empty
      #6%
      \parindent=0pt%
      \leftskip=0pt
      \rightskip=0pt
      \setbox0 = \hbox{\QCBOptA}%
      \@tempdima = #1\relax
      \ifOverFrame
          \typeout{This is not implemented yet}%
          \show\HELP
      \else
         \ifdim\wd0>\@tempdima
            \advance\@tempdima by \@tempdima
            \ifdim\wd0 >\@tempdima
               \textwidth=\@tempdima
               \setbox1 =\vbox{%
                  \noindent\hbox to \@tempdima{\hfill\GRAPHIC{#5}{#4}{#1}{#2}{#3}\hfill}\\%
                  \noindent\hbox to \@tempdima{\parbox[b]{\@tempdima}{\QCBOptA}}%
               }%
               \wd1=\@tempdima
            \else
               \textwidth=\wd0
               \setbox1 =\vbox{%
                 \noindent\hbox to \wd0{\hfill\GRAPHIC{#5}{#4}{#1}{#2}{#3}\hfill}\\%
                 \noindent\hbox{\QCBOptA}%
               }%
               \wd1=\wd0
            \fi
         \else
            \ifdim\wd0>0pt
              \hsize=\@tempdima
              \setbox1 =\vbox{%
                \unskip\GRAPHIC{#5}{#4}{#1}{#2}{0pt}%
                \break
                \unskip\hbox to \@tempdima{\hfill \QCBOptA\hfill}%
              }%
              \wd1=\@tempdima
           \else
              \hsize=\@tempdima
              \setbox1 =\vbox{%
                \unskip\GRAPHIC{#5}{#4}{#1}{#2}{0pt}%
              }%
              \wd1=\@tempdima
           \fi
         \fi
         \@tempdimb=\ht1
         \advance\@tempdimb by \dp1
         \advance\@tempdimb by -#2%
         \advance\@tempdimb by #3%
         \leavevmode
         \raise -\@tempdimb \hbox{\box1}%
      \fi
      \egroup%
}%
%
\def\DFRAME#1#2#3#4#5{%
 \begin{center}
     \let\QCTOptA\empty
     \let\QCTOptB\empty
     \let\QCBOptA\empty
     \let\QCBOptB\empty
     \ifOverFrame
        #5\QCTOptA\par
     \fi
     \GRAPHIC{#4}{#3}{#1}{#2}{\z@}
     \ifUnderFrame
        \nobreak\par\nobreak#5\QCBOptA
     \fi
 \end{center}%
 }%
%
\def\FFRAME#1#2#3#4#5#6#7{%
 \begin{figure}[#1]%
  \let\QCTOptA\empty
  \let\QCTOptB\empty
  \let\QCBOptA\empty
  \let\QCBOptB\empty
  \ifOverFrame
    #4
    \ifx\QCTOptA\empty
    \else
      \ifx\QCTOptB\empty
        \caption{\QCTOptA}%
      \else
        \caption[\QCTOptB]{\QCTOptA}%
      \fi
    \fi
    \ifUnderFrame\else
      \label{#5}%
    \fi
  \else
    \UnderFrametrue%
  \fi
  \begin{center}\GRAPHIC{#7}{#6}{#2}{#3}{\z@}\end{center}%
  \ifUnderFrame
    #4
    \ifx\QCBOptA\empty
      \caption{}%
    \else
      \ifx\QCBOptB\empty
        \caption{\QCBOptA}%
      \else
        \caption[\QCBOptB]{\QCBOptA}%
      \fi
    \fi
    \label{#5}%
  \fi
  \end{figure}%
 }%
%
%
%
%
%
\newcount\dispkind%

\def\makeactives{
  \catcode`\"=\active
  \catcode`\;=\active
  \catcode`\:=\active
  \catcode`\'=\active
  \catcode`\~=\active
}
\bgroup
   \makeactives
   \gdef\activesoff{%
      \def"{\string"}
      \def;{\string;}
      \def:{\string:}
      \def'{\string'}
      \def~{\string~}
    }
\egroup

\def\FRAME#1#2#3#4#5#6#7#8{%
 \bgroup
 \ifnum\draft=\@ne
   \wasdrafttrue
 \else
   \wasdraftfalse%
 \fi
 \def\LaTeXparams{}%
 \dispkind=\z@
 \def\LaTeXparams{}%
 \doFRAMEparams{#1}%
 \ifnum\dispkind=\z@\IFRAME{#2}{#3}{#4}{#7}{#8}{#5}\else
  \ifnum\dispkind=\@ne\DFRAME{#2}{#3}{#7}{#8}{#5}\else
   \ifnum\dispkind=\tw@
    \edef\@tempa{\noexpand\FFRAME{\LaTeXparams}}%
    \@tempa{#2}{#3}{#5}{#6}{#7}{#8}%
    \fi
   \fi
  \fi
  \ifwasdraft\draft=1\else\draft=0\fi{}%
  \egroup
 }%
%

\def\TEXUX#1{"texux"}

%
%
%
%
%
%
%
%
%
%

%
\long\def\QQQ#1#2{%
     \long\expandafter\def\csname#1\endcsname{#2}}%
\@ifundefined{QTP}{\def\QTP#1{}}{}
\@ifundefined{QEXCLUDE}{\def\QEXCLUDE#1{}}{}
\@ifundefined{Qlb}{}{}
\@ifundefined{Qlt}{}{}
\long\def\QQA#1#2{}%
\def\QTR#1#2{{\csname#1\endcsname #2}}
\def\EXPAND#1[#2]#3{}%
\def\NOEXPAND#1[#2]#3{}%
\def\LaTeXparent#1{}%
\def\ChildStyles#1{}%
\def\ChildDefaults#1{}%
\def\QTagDef#1#2#3{}%

\@ifundefined{correctchoice}{}{}
\@ifundefined{HTML}{\def\HTML#1{\relax}}{}
\@ifundefined{TCIIcon}{\def\TCIIcon#1#2#3#4{\relax}}{}
\if@compatibility
  \typeout{Not defining UNICODE or CustomNote commands for LaTeX 2.09.}
\else
  \providecommand{\UNICODE}[2][]{}
  
\fi

%
\@ifundefined{StyleEditBeginDoc}{}{}
%
\def\QQfnmark#1{\footnotemark}

%
%
\@ifundefined{TCIMAKEINDEX}{}{\makeindex}%
%
\@ifundefined{abstract}{%
 \def\abstract{%
  \if@twocolumn
   \section*{Abstract (Not appropriate in this style!)}%
   \else \small
   \begin{center}{\bf Abstract\vspace{-.5em}\vspace{\z@}}\end{center}%
   \quotation
   \fi
  }%
 }{%
 }%
\@ifundefined{endabstract}{\def\endabstract
  {\if@twocolumn\else\endquotation\fi}}{}%
\@ifundefined{maketitle}{\def\maketitle#1{}}{}%
\@ifundefined{affiliation}{\def\affiliation#1{}}{}%
\@ifundefined{proof}{}{}%
\@ifundefined{endproof}{}{}%
\@ifundefined{newfield}{\def\newfield#1#2{}}{}%
\@ifundefined{chapter}{\def\chapter#1{\par(Chapter head:)#1\par }%
 \newcount\c@chapter}{}%
\@ifundefined{part}{\def\part#1{\par(Part head:)#1\par }}{}%
\@ifundefined{section}{\def\section#1{\par(Section head:)#1\par }}{}%
\@ifundefined{subsection}{\def\subsection#1%
 {\par(Subsection head:)#1\par }}{}%
\@ifundefined{subsubsection}{\def\subsubsection#1%
 {\par(Subsubsection head:)#1\par }}{}%
\@ifundefined{paragraph}{\def\paragraph#1%
 {\par(Subsubsubsection head:)#1\par }}{}%
\@ifundefined{subparagraph}{\def\subparagraph#1%
 {\par(Subsubsubsubsection head:)#1\par }}{}%
\@ifundefined{therefore}{}{}%
\@ifundefined{backepsilon}{}{}%
\@ifundefined{yen}{}{}%
\@ifundefined{registered}{%
   \def\registered{\relax\ifmmode{}\r@gistered
                    \else$\m@th\r@gistered$\fi}%
 \def\r@gistered{^{\ooalign
  {\hfil\raise.07ex\hbox{$\scriptstyle\rm\text{R}$}\hfil\crcr
  \mathhexbox20D}}}}{}%
\@ifundefined{Eth}{}{}%
\@ifundefined{eth}{}{}%
\@ifundefined{Thorn}{}{}%
\@ifundefined{thorn}{}{}%
%
\@ifundefined{degree}{}{}%
%
\newdimen\theight
\def\Column{%
 \vadjust{\setbox\z@=\hbox{\scriptsize\quad\quad tcol}%
  \theight=\ht\z@\advance\theight by \dp\z@\advance\theight by \lineskip
  \kern -\theight \vbox to \theight{%
   \rightline{\rlap{\box\z@}}%
   \vss
   }%
  }%
 }%
\def\qed{%
 \ifhmode\unskip\nobreak\fi\ifmmode\ifinner\else\hskip5\p@\fi\fi
 \hbox{\hskip5\p@\vrule width4\p@ height6\p@ depth1.5\p@\hskip\p@}%
 }%
\def\miss{\hbox{\vrule height2\p@ width 2\p@ depth\z@}}%
%
%
\def\tcol#1{{\baselineskip=6\p@ \vcenter{#1}} \Column}  %
%
%
\@ifundefined{note}{}{}%

\def\newfmtname{LaTeX2e}

\ifx\fmtname\newfmtname
  \DeclareOldFontCommand{\rm}{\normalfont\rmfamily}{\mathrm}
  \DeclareOldFontCommand{\sf}{\normalfont\sffamily}{\mathsf}
  \DeclareOldFontCommand{\tt}{\normalfont\ttfamily}{\mathtt}
  \DeclareOldFontCommand{\bf}{\normalfont\bfseries}{\mathbf}
  \DeclareOldFontCommand{\it}{\normalfont\itshape}{\mathit}
  \DeclareOldFontCommand{\sl}{\normalfont\slshape}{\@nomath\sl}
  \DeclareOldFontCommand{\sc}{\normalfont\scshape}{\@nomath\sc}
\fi

%

\def\alpha{{\Greekmath 010B}}%
\def\beta{{\Greekmath 010C}}%
\def\gamma{{\Greekmath 010D}}%
\def\delta{{\Greekmath 010E}}%
\def\epsilon{{\Greekmath 010F}}%
\def\zeta{{\Greekmath 0110}}%
\def\eta{{\Greekmath 0111}}%
\def\theta{{\Greekmath 0112}}%
\def\iota{{\Greekmath 0113}}%
\def\kappa{{\Greekmath 0114}}%
\def\lambda{{\Greekmath 0115}}%
\def\mu{{\Greekmath 0116}}%
\def\nu{{\Greekmath 0117}}%
\def\xi{{\Greekmath 0118}}%
\def\pi{{\Greekmath 0119}}%
\def\rho{{\Greekmath 011A}}%
\def\sigma{{\Greekmath 011B}}%
\def\tau{{\Greekmath 011C}}%
\def\upsilon{{\Greekmath 011D}}%
\def\phi{{\Greekmath 011E}}%
\def\chi{{\Greekmath 011F}}%
\def\psi{{\Greekmath 0120}}%
\def\omega{{\Greekmath 0121}}%
\def\varepsilon{{\Greekmath 0122}}%
\def\vartheta{{\Greekmath 0123}}%
\def\varpi{{\Greekmath 0124}}%
\def\varrho{{\Greekmath 0125}}%
\def\varsigma{{\Greekmath 0126}}%
\def\varphi{{\Greekmath 0127}}%

\def\nabla{{\Greekmath 0272}}
\def\FindBoldGroup{%
   {\setbox0=\hbox{$\mathbf{x\global\edef\theboldgroup{\the\mathgroup}}$}}%
}

\def\Greekmath#1#2#3#4{%
    \if@compatibility
        \ifnum\mathgroup=\symbold
           \mathchoice{\mbox{\boldmath$\displaystyle\mathchar"#1#2#3#4$}}%
                      {\mbox{\boldmath$\textstyle\mathchar"#1#2#3#4$}}%
                      {\mbox{\boldmath$\scriptstyle\mathchar"#1#2#3#4$}}%
                      {\mbox{\boldmath$\scriptscriptstyle\mathchar"#1#2#3#4$}}%
        \else
           \mathchar"#1#2#3#4%
        \fi
    \else
        \FindBoldGroup
        \ifnum\mathgroup=\theboldgroup 
           \mathchoice{\mbox{\boldmath$\displaystyle\mathchar"#1#2#3#4$}}%
                      {\mbox{\boldmath$\textstyle\mathchar"#1#2#3#4$}}%
                      {\mbox{\boldmath$\scriptstyle\mathchar"#1#2#3#4$}}%
                      {\mbox{\boldmath$\scriptscriptstyle\mathchar"#1#2#3#4$}}%
        \else
           \mathchar"#1#2#3#4%
        \fi
	  \fi}

\newif\ifGreekBold  \GreekBoldfalse
\let\SAVEPBF=\pbf
\def\pbf{\GreekBoldtrue\SAVEPBF}%

\@ifundefined{theorem}{\newtheorem{theorem}{Theorem}}{}
\@ifundefined{lemma}{\newtheorem{lemma}[theorem]{Lemma}}{}
\@ifundefined{corollary}{\newtheorem{corollary}[theorem]{Corollary}}{}
\@ifundefined{conjecture}{}{}
\@ifundefined{proposition}{\newtheorem{proposition}[theorem]{Proposition}}{}
\@ifundefined{axiom}{}{}
\@ifundefined{remark}{\newtheorem{remark}{Remark}}{}
\@ifundefined{example}{\newtheorem{example}{Example}}{}
\@ifundefined{exercise}{}{}
\@ifundefined{definition}{}{}

\@ifundefined{mathletters}{%
  \newcounter{equationnumber}
  \def\mathletters{%
     \addtocounter{equation}{1}
     \edef\@currentlabel{\theequation}%
     \setcounter{equationnumber}{\c@equation}
     \setcounter{equation}{0}%
     \edef\theequation{\@currentlabel\noexpand\alph{equation}}%
  }
  
}{}

\@ifundefined{BibTeX}{%
    \def\BibTeX{{\rm B\kern-.05em{\sc i\kern-.025em b}\kern-.08em
                 T\kern-.1667em\lower.7ex\hbox{E}\kern-.125emX}}}{}%
\@ifundefined{AmS}%
    {\def\AmS{{\protect\usefont{OMS}{cmsy}{m}{n}%
                A\kern-.1667em\lower.5ex\hbox{M}\kern-.125emS}}}{}%
\@ifundefined{AmSTeX}{}{}%
%

\def\@@eqncr{\let\@tempa\relax
    \ifcase\@eqcnt \def\@tempa{& & &}\or \def\@tempa{& &}%
      \else \def\@tempa{&}\fi
     \@tempa
     \if@eqnsw
        \iftag@
           \@taggnum
        \else
           \@eqnnum\stepcounter{equation}%
        \fi
     \fi
     \global\tag@false
     \global\@eqnswtrue
     \global\@eqcnt\z@\cr}

\def\TCItag{\@ifnextchar*{\@TCItagstar}{\@TCItag}}
\def\@TCItag#1{%
    \global\tag@true
    \global\def\@taggnum{(#1)}}
\def\@TCItagstar*#1{%
    \global\tag@true
    \global\def\@taggnum{#1}}
%
%
%
%
%
%
%
%
%
%
%
%
%
%
%
%
%
%
%
%
%
%
%
%
%
%
%
%
%
%
%
%
%
%
%
%
%
%
%
%
%
%
%
%
%
%
%
%
%
%
%
%
%
%
%
%
%
%
%
%
%
%
%

%
%
\ifx\ds@amstex\relax
   \message{amstex already loaded}\makeatother 
\else
   \@ifpackageloaded{amsmath}%
      {\message{amsmath already loaded}\makeatother }
      {}
   \@ifpackageloaded{amstex}%
      {\message{amstex already loaded}\makeatother }
      {}
   \@ifpackageloaded{amsgen}%
      {\message{amsgen already loaded}\makeatother }
      {}
\fi
%
%
%
%
\let\DOTSI\relax
\def\RIfM@{\relax\ifmmode}%
\def\FN@{\futurelet\next}%
\newcount\intno@
\def\iint{\DOTSI\intno@\tw@\FN@\ints@}%
\def\iiint{\DOTSI\intno@\thr@@\FN@\ints@}%
\def\iiiint{\DOTSI\intno@4 \FN@\ints@}%
\def\idotsint{\DOTSI\intno@\z@\FN@\ints@}%
\def\ints@{\findlimits@\ints@@}%
\newif\iflimtoken@
\newif\iflimits@
\def\findlimits@{\limtoken@true\ifx\next\limits\limits@true
 \else\ifx\next\nolimits\limits@false\else
 \limtoken@false\ifx\ilimits@\nolimits\limits@false\else
 \ifinner\limits@false\else\limits@true\fi\fi\fi\fi}%
\def\multint@{\int\ifnum\intno@=\z@\intdots@                          
 \else\intkern@\fi                                                    
 \ifnum\intno@>\tw@\int\intkern@\fi                                   
 \ifnum\intno@>\thr@@\int\intkern@\fi                                 
 \int}
\def\multintlimits@{\intop\ifnum\intno@=\z@\intdots@\else\intkern@\fi
 \ifnum\intno@>\tw@\intop\intkern@\fi
 \ifnum\intno@>\thr@@\intop\intkern@\fi\intop}%
\def\intic@{%
    \mathchoice{\hskip.5em}{\hskip.4em}{\hskip.4em}{\hskip.4em}}%
\def\negintic@{\mathchoice
 {\hskip-.5em}{\hskip-.4em}{\hskip-.4em}{\hskip-.4em}}%
\def\ints@@{\iflimtoken@                                              
 \def\ints@@@{\iflimits@\negintic@
   \mathop{\intic@\multintlimits@}\limits                             
  \else\multint@\nolimits\fi                                          
  \eat@}
 \else                                                                
 \def\ints@@@{\iflimits@\negintic@
  \mathop{\intic@\multintlimits@}\limits\else
  \multint@\nolimits\fi}\fi\ints@@@}%
\def\intkern@{\mathchoice{\!\!\!}{\!\!}{\!\!}{\!\!}}%
\def\plaincdots@{\mathinner{\cdotp\cdotp\cdotp}}%
\def\intdots@{\mathchoice{\plaincdots@}%
 {{\cdotp}\mkern1.5mu{\cdotp}\mkern1.5mu{\cdotp}}%
 {{\cdotp}\mkern1mu{\cdotp}\mkern1mu{\cdotp}}%
 {{\cdotp}\mkern1mu{\cdotp}\mkern1mu{\cdotp}}}%
%
%
%
\def\RIfM@{\relax\protect\ifmmode}
\def\text{\RIfM@\expandafter\text@\else\expandafter\mbox\fi}
\let\nfss@text\text
\def\text@#1{\mathchoice
   {\textdef@\displaystyle\f@size{#1}}%
   {\textdef@\textstyle\tf@size{\firstchoice@false #1}}%
   {\textdef@\textstyle\sf@size{\firstchoice@false #1}}%
   {\textdef@\textstyle \ssf@size{\firstchoice@false #1}}%
   \glb@settings}

\def\textdef@#1#2#3{\hbox{{%
                    \everymath{#1}%
                    \let\f@size#2\selectfont
                    #3}}}
\newif\iffirstchoice@
\firstchoice@true
%
%
\def\Let@{\relax\iffalse{\fi\let\\=\cr\iffalse}\fi}%
\def\vspace@{\def\vspace##1{\crcr\noalign{\vskip##1\relax}}}%
\def\multilimits@{\bgroup\vspace@\Let@
 \baselineskip\fontdimen10 \scriptfont\tw@
 \advance\baselineskip\fontdimen12 \scriptfont\tw@
 \lineskip\thr@@\fontdimen8 \scriptfont\thr@@
 \lineskiplimit\lineskip
 \vbox\bgroup\ialign\bgroup\hfil$\m@th\scriptstyle{##}$\hfil\crcr}%
\def\Sb{_\multilimits@}%
\def\endSb{\crcr\egroup\egroup\egroup}%
\def\Sp{^\multilimits@}%

%
%
%
\newdimen\ex@
\ex@.2326ex
\def\rightarrowfill@#1{$#1\m@th\mathord-\mkern-6mu\cleaders
 \hbox{$#1\mkern-2mu\mathord-\mkern-2mu$}\hfill
 \mkern-6mu\mathord\rightarrow$}%
\def\leftarrowfill@#1{$#1\m@th\mathord\leftarrow\mkern-6mu\cleaders
 \hbox{$#1\mkern-2mu\mathord-\mkern-2mu$}\hfill\mkern-6mu\mathord-$}%
\def\leftrightarrowfill@#1{$#1\m@th\mathord\leftarrow
\mkern-6mu\cleaders
 \hbox{$#1\mkern-2mu\mathord-\mkern-2mu$}\hfill
 \mkern-6mu\mathord\rightarrow$}%
\def\overrightarrow{\mathpalette\overrightarrow@}%
\def\overrightarrow@#1#2{\vbox{\ialign{##\crcr\rightarrowfill@#1\crcr
 \noalign{\kern-\ex@\nointerlineskip}$\m@th\hfil#1#2\hfil$\crcr}}}%

\def\overleftarrow{\mathpalette\overleftarrow@}%
\def\overleftarrow@#1#2{\vbox{\ialign{##\crcr\leftarrowfill@#1\crcr
 \noalign{\kern-\ex@\nointerlineskip}$\m@th\hfil#1#2\hfil$\crcr}}}%
\def\overleftrightarrow{\mathpalette\overleftrightarrow@}%
\def\overleftrightarrow@#1#2{\vbox{\ialign{##\crcr
   \leftrightarrowfill@#1\crcr
 \noalign{\kern-\ex@\nointerlineskip}$\m@th\hfil#1#2\hfil$\crcr}}}%
\def\underrightarrow{\mathpalette\underrightarrow@}%
\def\underrightarrow@#1#2{\vtop{\ialign{##\crcr$\m@th\hfil#1#2\hfil
  $\crcr\noalign{\nointerlineskip}\rightarrowfill@#1\crcr}}}%

\def\underleftarrow{\mathpalette\underleftarrow@}%
\def\underleftarrow@#1#2{\vtop{\ialign{##\crcr$\m@th\hfil#1#2\hfil
  $\crcr\noalign{\nointerlineskip}\leftarrowfill@#1\crcr}}}%
\def\underleftrightarrow{\mathpalette\underleftrightarrow@}%
\def\underleftrightarrow@#1#2{\vtop{\ialign{##\crcr$\m@th
  \hfil#1#2\hfil$\crcr
 \noalign{\nointerlineskip}\leftrightarrowfill@#1\crcr}}}%

\def\qopnamewl@#1{\mathop{\operator@font#1}\nlimits@}
\let\nlimits@\displaylimits
\def\setboxz@h{\setbox\z@\hbox}

\def\varlim@#1#2{\mathop{\vtop{\ialign{##\crcr
 \hfil$#1\m@th\operator@font lim$\hfil\crcr
 \noalign{\nointerlineskip}#2#1\crcr
 \noalign{\nointerlineskip\kern-\ex@}\crcr}}}}

 \def\rightarrowfill@#1{\m@th\setboxz@h{$#1-$}\ht\z@\z@
  $#1\copy\z@\mkern-6mu\cleaders
  \hbox{$#1\mkern-2mu\box\z@\mkern-2mu$}\hfill
  \mkern-6mu\mathord\rightarrow$}
\def\leftarrowfill@#1{\m@th\setboxz@h{$#1-$}\ht\z@\z@
  $#1\mathord\leftarrow\mkern-6mu\cleaders
  \hbox{$#1\mkern-2mu\copy\z@\mkern-2mu$}\hfill
  \mkern-6mu\box\z@$}

\def\projlim{\qopnamewl@{proj\,lim}}
\def\injlim{\qopnamewl@{inj\,lim}}
\def\varinjlim{\mathpalette\varlim@\rightarrowfill@}
\def\varprojlim{\mathpalette\varlim@\leftarrowfill@}
\def\varliminf{\mathpalette\varliminf@{}}
\def\varliminf@#1{\mathop{\underline{\vrule\@depth.2\ex@\@width\z@
   \hbox{$#1\m@th\operator@font lim$}}}}
\def\varlimsup{\mathpalette\varlimsup@{}}
\def\varlimsup@#1{\mathop{\overline
  {\hbox{$#1\m@th\operator@font lim$}}}}

%
%
%
%
%
%
\begingroup \catcode `|=0 \catcode `[= 1
\catcode`]=2 \catcode `\{=12 \catcode `\}=12
\catcode`\\=12
|gdef|@alignverbatim#1\end{align}[#1|end[align]]
|gdef|@salignverbatim#1\end{align*}[#1|end[align*]]

|gdef|@alignatverbatim#1\end{alignat}[#1|end[alignat]]
|gdef|@salignatverbatim#1\end{alignat*}[#1|end[alignat*]]

|gdef|@xalignatverbatim#1\end{xalignat}[#1|end[xalignat]]
|gdef|@sxalignatverbatim#1\end{xalignat*}[#1|end[xalignat*]]

|gdef|@gatherverbatim#1\end{gather}[#1|end[gather]]
|gdef|@sgatherverbatim#1\end{gather*}[#1|end[gather*]]

|gdef|@gatherverbatim#1\end{gather}[#1|end[gather]]
|gdef|@sgatherverbatim#1\end{gather*}[#1|end[gather*]]

|gdef|@multilineverbatim#1\end{multiline}[#1|end[multiline]]
|gdef|@smultilineverbatim#1\end{multiline*}[#1|end[multiline*]]

|gdef|@arraxverbatim#1\end{arrax}[#1|end[arrax]]
|gdef|@sarraxverbatim#1\end{arrax*}[#1|end[arrax*]]

|gdef|@tabulaxverbatim#1\end{tabulax}[#1|end[tabulax]]
|gdef|@stabulaxverbatim#1\end{tabulax*}[#1|end[tabulax*]]

|endgroup

\def\align{\@verbatim \frenchspacing\@vobeyspaces \@alignverbatim
You are using the "align" environment in a style in which it is not defined.}

\@namedef{align*}{\@verbatim\@salignverbatim
You are using the "align*" environment in a style in which it is not defined.}
\expandafter\let\csname endalign*\endcsname =\endtrivlist

\def\alignat{\@verbatim \frenchspacing\@vobeyspaces \@alignatverbatim
You are using the "alignat" environment in a style in which it is not defined.}

\@namedef{alignat*}{\@verbatim\@salignatverbatim
You are using the "alignat*" environment in a style in which it is not defined.}
\expandafter\let\csname endalignat*\endcsname =\endtrivlist

\def\xalignat{\@verbatim \frenchspacing\@vobeyspaces \@xalignatverbatim
You are using the "xalignat" environment in a style in which it is not defined.}

\@namedef{xalignat*}{\@verbatim\@sxalignatverbatim
You are using the "xalignat*" environment in a style in which it is not defined.}
\expandafter\let\csname endxalignat*\endcsname =\endtrivlist

\def\gather{\@verbatim \frenchspacing\@vobeyspaces \@gatherverbatim
You are using the "gather" environment in a style in which it is not defined.}

\@namedef{gather*}{\@verbatim\@sgatherverbatim
You are using the "gather*" environment in a style in which it is not defined.}
\expandafter\let\csname endgather*\endcsname =\endtrivlist

\def\multiline{\@verbatim \frenchspacing\@vobeyspaces \@multilineverbatim
You are using the "multiline" environment in a style in which it is not defined.}

\@namedef{multiline*}{\@verbatim\@smultilineverbatim
You are using the "multiline*" environment in a style in which it is not defined.}
\expandafter\let\csname endmultiline*\endcsname =\endtrivlist

\def\arrax{\@verbatim \frenchspacing\@vobeyspaces \@arraxverbatim
You are using a type of "array" construct that is only allowed in AmS-LaTeX.}

\def\tabulax{\@verbatim \frenchspacing\@vobeyspaces \@tabulaxverbatim
You are using a type of "tabular" construct that is only allowed in AmS-LaTeX.}

\@namedef{arrax*}{\@verbatim\@sarraxverbatim
You are using a type of "array*" construct that is only allowed in AmS-LaTeX.}
\expandafter\let\csname endarrax*\endcsname =\endtrivlist

\@namedef{tabulax*}{\@verbatim\@stabulaxverbatim
You are using a type of "tabular*" construct that is only allowed in AmS-LaTeX.}
\expandafter\let\csname endtabulax*\endcsname =\endtrivlist


 \def\endequation{%
     \ifmmode\ifinner 
      \iftag@
        \addtocounter{equation}{-1} 
        $\hfil
           \displaywidth\linewidth\@taggnum\egroup \endtrivlist
        \global\tag@false
        \global\@ignoretrue
      \else
        $\hfil
           \displaywidth\linewidth\@eqnnum\egroup \endtrivlist
        \global\tag@false
        \global\@ignoretrue
      \fi
     \else
      \iftag@
        \addtocounter{equation}{-1} 
        \eqno \hbox{\@taggnum}
        \global\tag@false%
        $$\global\@ignoretrue
      \else
        \eqno \hbox{\@eqnnum}
        $$\global\@ignoretrue
      \fi
     \fi\fi
 }

 \newif\iftag@ \tag@false

 \def\TCItag{\@ifnextchar*{\@TCItagstar}{\@TCItag}}
 \def\@TCItag#1{%
     \global\tag@true
     \global\def\@taggnum{(#1)}}
 \def\@TCItagstar*#1{%
     \global\tag@true
     \global\def\@taggnum{#1}}

  \@ifundefined{tag}{
     \def\tag{\@ifnextchar*{\@tagstar}{\@tag}}
     \def\@tag#1{%
         \global\tag@true
         \global\def\@taggnum{(#1)}}
     \def\@tagstar*#1{%
         \global\tag@true
         \global\def\@taggnum{#1}}
  }{}

\makeatother

\begin{document}
\title[Simpson's inequalities]{An Improvement inequality of Simpson's type for Quasi-Convex Mappings with Applications}
\author[M.W. Alomari]{Mohammad W. Alomari}
\address{Department of Mathematics,
Faculty of Science and Information Technology, Irbid National
University, 2600 Irbid 21110, Jordan} \email{mwomath@gmail.com}

\date{September 5, 2010.}
\subjclass[2000]{Primary 26D15, Secondary 26D10}
\keywords{Simpson's inequality, quasi-convex function.}

\begin{abstract}
In this paper, an inequality of Simpson type for quasi-convex
mappings are proved. The constant in the classical Simpson's
inequality is improved. Furthermore, the obtained bounds can be
(much) better than some recently obtained bounds. Application to
Simpson's quadrature rule is also given.
\end{abstract}

\maketitle

\section{Introduction}

Suppose $  f:[a,b] \to  \mathbb{R} $, is fourth times continuously
differentiable mapping on $\left( {a,b} \right)$ and $ \left\|
{f^{\left( 4 \right)} } \right\|_\infty  : = \mathop {\sup
}\limits_{x \in \left( {a,b} \right)} \left| {f^{\left( 4 \right)}
\left( x \right)} \right| < \infty$  . The following inequality
\begin{align}
\label{eq1.1}\left|{\int\limits_a^b {f\left( x \right)dx}
-\frac{(b-a)}{6}\left[ {f\left( a \right) + 4 f\left( {\frac{{a +
b}}{2}} \right) + f\left( b \right)} \right]} \right| \le
\frac{\left( {b - a} \right)^5}{{2880}}\left\| {f^{\left( 4
\right)} } \right\|_\infty
\end{align}
holds, and it is well known in the literature as Simpson's
inequality. It is well known that if the mapping $f$ is neither
four times differentiable nor is the fourth derivative $f^{(4)}$
bounded on $(a, b)$, then we cannot apply the classical Simpson
quadrature formula.\\

In \cite{12b}, Pe\v{c}ari\'{c} and Varo\v{s}anec, obtained some
inequalities of Simpson's type for functions whose $n$-th
derivative, $n \in \left\{ {0,1,2,3} \right\}$ is of bounded
variation, as follow:
\begin{theorem}
\label{thm1.1} Let $n \in \left\{ {0,1,2,3} \right\}$. Let $f$ be
a real function on $[a,b]$ such that $f^{(n)}$ is function of
bounded variation. Then
\begin{multline}
\label{eq1.2}\left|{\int\limits_a^b {f\left( x \right)dx}
-\frac{(b-a)}{6}\left[ {f\left( a \right) + 4 f\left( {\frac{{a +
b}}{2}} \right) + f\left( b \right)} \right]} \right| \\ \le C_n
\left( {b - a} \right)^{n + 1} \bigvee_a^b \left( {f^{\left( n
\right)} } \right),
\end{multline}
where, $$C_0  = \frac{1}{3},\,\, C_1  = \frac{1}{{24}},\,\, C_2  =
\frac{1}{{324}}, \,\,C_3  = \frac{1}{{1152}},$$ and $V_a^b \left(
f^{(n)} \right)$ is the total variation of $f^{(n)}$ on the
interval $[a,b]$.
\end{theorem}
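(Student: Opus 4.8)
The plan is to represent the quadrature error as a Riemann--Stieltjes integral against $f^{(n)}$ with a fixed Peano-type kernel, and then bound it by the supremum of the kernel times the total variation. Write $E(f)$ for the left-hand side of \eqref{eq1.2} and set $m=\tfrac{a+b}{2}$. The natural base kernel is the piecewise-linear function
\[
p_0(t)=\begin{cases} t-\dfrac{5a+b}{6}, & t\in[a,m],\\[4pt] t-\dfrac{a+5b}{6}, & t\in(m,b],\end{cases}
\]
which has a single jump at $m$; that jump is exactly what generates the midpoint term $4f(m)$. For $n=0$ I would split $\int_a^b p_0\,df$ at $m$ and integrate by parts on each piece in the Riemann--Stieltjes sense. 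The boundary contributions at $a$, $m$, $b$ reassemble precisely into $\tfrac{b-a}{6}\big[f(a)+4f(m)+f(b)\big]$, giving $E(f)=-\int_a^b p_0(t)\,df(t)$. Hence
\[
|E(f)|\le\Big(\sup_{t\in[a,b]}|p_0(t)|\Big)\bigvee_a^b(f),
\]
and a direct inspection shows $\sup|p_0|=\tfrac{b-a}{3}$, attained at the one-sided limits at $m$, which is exactly $C_0(b-a)$.

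For $n\in\{1,2,3\}$ the idea is to iterate. Define $P_0=p_0$ and $P_k(t)=\int_a^t P_{k-1}(s)\,ds$; note that $P_k$ is continuous for $k\ge1$, so no interior jump terms appear after the first step. Assuming $f^{(n)}$ is of bounded variation (so each lower-order derivative is absolutely continuous), one pushes the Stieltjes integration by parts upward one order at a time. At the $k$-th step the boundary terms are multiples of $P_k(a)$ and $P_k(b)$: here $P_k(a)=0$ by the choice of lower limit, while $P_k(b)=\int_a^b P_{k-1}(t)\,dt$ coincides, up to sign, with $E$ evaluated at the monomial $x^k/k!$. Since Simpson's rule is exact for polynomials of degree $\le 3$, all these boundary terms vanish for $k\le 3$ --- and this is precisely why the statement is restricted to $n\le3$. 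The outcome is the representation
\[
E(f)=(-1)^{n+1}\int_a^b P_n(t)\,df^{(n)}(t),\qquad |E(f)|\le\Big(\sup_{[a,b]}|P_n|\Big)\bigvee_a^b\!\big(f^{(n)}\big).
\]

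It then remains to compute $M_n:=\sup_{[a,b]}|P_n|$ and to check $M_n=C_n(b-a)^{n+1}$. Because $p_0$ is odd about $m$ (one verifies $p_0(a+b-t)=-p_0(t)$), each $P_k$ is alternately even and odd about $m$, which localizes the extrema and halves the work. The extrema of $P_n$ occur where $P_n'=P_{n-1}$ vanishes; for instance $P_1$ vanishes at $a+\tfrac{b-a}{3}$, and evaluating $P_2$ there already yields $M_2=\tfrac{(b-a)^3}{324}$. Carrying this out gives the closed forms $M_1=\tfrac{(b-a)^2}{24}$, $M_2=\tfrac{(b-a)^3}{324}$, $M_3=\tfrac{(b-a)^4}{1152}$, that is $C_1=\tfrac1{24}$, $C_2=\tfrac1{324}$, $C_3=\tfrac1{1152}$.

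I expect this last step --- locating the maximum of the piecewise quartic $P_3$ and confirming that no larger value is hidden at an interior critical point --- to be the main obstacle, since it requires solving for critical abscissae of a degree-four piece rather than reading off values at the endpoints or the midpoint. By contrast, the integration-by-parts bookkeeping of the previous paragraph is routine once one observes that the vanishing of all boundary terms is forced by the exactness of Simpson's rule on cubics.
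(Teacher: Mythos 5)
Theorem \ref{thm1.1} is quoted in the paper from Pe\v{c}ari\'{c} and Varo\v{s}anec \cite{12b} and is stated without proof, so there is no in-paper argument to compare yours against; I can only judge your proposal on its own merits, and it is correct. It is, moreover, essentially the standard Peano-kernel/Riemann--Stieltjes argument of that literature: your $n=0$ paragraph is exactly Dragomir's proof in \cite{5b}, and the iterated antiderivatives $P_k$ with vanishing boundary terms are how the higher-order constants are obtained. The substance checks out: normalizing $a=0$, $b=1$, one finds on $[0,\frac12]$ that $P_1(t)=\frac{t^2}{2}-\frac{t}{6}$, $P_2(t)=\frac{t^3}{6}-\frac{t^2}{12}$, $P_3(t)=\frac{t^4}{24}-\frac{t^3}{36}$; the identities $P_k(b)=\pm E(x^k/k!)=0$ for $k\le 3$ do follow from the exactness of Simpson's rule on cubics (and the failure at degree $4$ is exactly what stops the iteration at $n=3$); and the suprema are $\frac{1}{24}$ (at the midpoint), $\frac{1}{324}$ (at $t=\frac13$, a zero of $P_1$), and $\frac{1}{1152}$ (at the midpoint), i.e. $C_1$, $C_2$, $C_3$ as claimed, with $C_0=\frac13$ coming from $\sup|p_0|=\frac{b-a}{3}$. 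Two remarks. First, your instinct to split the $n=0$ case at the midpoint is not merely convenient but necessary: $p_0$ and $f$ may share a jump at $m$, in which case the single Stieltjes integral $\int_a^b p_0\,df$ need not exist; working with the two integrals over $[a,m]$ and $[m,b]$ (each with continuous integrand against a BV integrator) and bounding each by $\frac{b-a}{3}$ times the variation over that half avoids the difficulty. Second, the step you single out as the main obstacle is actually the easiest of the three: the critical points of $P_3$ are the zeros of $P_3'=P_2$, and $P_2$ (equal to $\frac{t^2}{12}(2t-1)$ on the left half and odd about the midpoint) vanishes only at $a$, $m$ and $b$; hence $P_3$ is monotone on each half, its extremum sits at the midpoint, and no quartic root-finding is required.
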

Here to note that, the inequality (\ref{eq1.2}) with $n=0$, was
proved in literature by Dragomir \cite{5b}. Also, Ghizzetti and
Ossicini \cite{11b}, proved that if $f'''$ is an absolutely
continuous mapping with total variation $\bigvee_a^b \left( f
\right)$,
then (\ref{eq1.2}) with $n=3$ holds.\\

In recent years many authors were established several
generalizations of the Simpson's inequality for mappings of
bounded variation and for Lipschitzian, monotonic, and absolutely
continuous mappings via kernels, for refinements, counterparts,
generalizations and several Simpson's type inequalities see
\cite{4b}--\cite{18b}.\\

The notion of quasi-convex functions generalizes the notion of
convex functions. More precisely, a function $f : [a, b] \to
\mathbb{R}$, is said quasi-convex on $[a, b]$ if $$f\left(
{\lambda x + \left( {1 - \lambda } \right)y} \right) \le \mathop
{\sup } \left\{ {f\left( x \right),f\left( y \right)} \right\},$$
for all $x,y \in [a,b]$ and $\lambda \in [0,1]$. Clearly, any
convex function is a quasi-convex function. Furthermore, there
exist quasi-convex functions which are not convex nor continuous.
\begin{example}
The floor function $f_{loor}(x) = \left\lfloor x \right\rfloor $,
is the largest integer not greater than $x$, is an example of a
monotonic increasing function which is quasi-convex but it is
neither convex nor continuous.
\end{example}
In the same time, one can note that the quasi-convex mappings may
be not of bounded variation, i.e., there exist quasi-convex
functions which are not of bounded variation. For example,
consider the function $f: [0,2] \to \mathbb{R}$, defined by
\begin{equation*}
f\left( x \right) = \left\{
\begin{array}{l}
 x\sin \left( {\frac{\pi }{x}} \right),\,\,\,\,x \ne 0 \\
 0,\,\,\,\,\,\,\,\,\,\,\,\,\,\,\,\,\,\,\,\,\,\,\,\,\,x = 0 \\
 \end{array} \right.,
\end{equation*}
therefore, $f$ is quasi-convex but not of bounded variation on
[0,2]. Therefore, we cannot apply the above inequalities. For
recent inequalities concerning quasi-convex mappings see
\cite{1b}--\cite{4bb}.\\

In this paper, we obtain an inequality of Simpson type via
quasi-convex mapping. This approach allows us to investigate
Simpson's quadrature rule that have restrictions on the behavior
of the integrand and thus to deal with larger classes of
functions. In general, we show that our result is better than the
classical inequality (\ref{eq1.1}).

\section{Inequalities of Simpson's type for quasi-convex functions}
In order to prove our main results, we start with the following
lemma:
\begin{lemma}
\label{lma1}Let $f^ {\prime \prime \prime}:I\subseteq
\mathbb{R}\rightarrow \mathbb{R}$ be an absolutely continuous
mapping on $I^{\circ }$, where $a,b\in I$ with $a<b$. If
$f^{(4)}\in L[a,b]$, then the following equality holds:
\begin{multline}
\int_a^b {f\left( x \right)dx}- \frac{(b-a)}{6}\left[ {f\left( a
\right) + 4f\left( {\frac{{a + b}}{2}} \right) + f\left( b
\right)} \right]
 \label{eq1} \\
=  \left( {b - a} \right)^5 \int_0^1 {p\left( t \right)f^{\left(
{4} \right)} \left( {ta + \left( {1 - t} \right)b} \right)dt},
\end{multline}
where, $$  p\left( t \right) = \left\{ \begin{array}{l}
 {\textstyle{1 \over {24}}}t^3 \left( {t - {\textstyle{2 \over 3}}} \right),\,\,\,\,\,\,\,\,\,\,\,\,\,\,\,\,\,\,\,\,\,t \in \left[ {0,{\textstyle{1 \over 2}}} \right] \\
 \\
 {\textstyle{1 \over {24}}}\left( {t - 1} \right)^3 \left( {t - {\textstyle{1 \over 3}}} \right),\,\,\,\,\,t \in \left( {{\textstyle{1 \over 2}},1} \right] \\
 \end{array} \right..
$$
\end{lemma}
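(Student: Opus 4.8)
The plan is to verify the identity by starting from the right-hand side and integrating by parts four times, peeling the fourth derivative of $f$ down to $f$ itself. First I would absorb the affine change of variable by setting $\phi(t) = f(ta + (1-t)b)$, so that the chain rule gives $\phi^{(4)}(t) = (b-a)^4 f^{(4)}(ta+(1-t)b)$ and hence
\begin{equation*}
(b-a)^5\int_0^1 p(t)\, f^{(4)}(ta+(1-t)b)\,dt = (b-a)\int_0^1 p(t)\,\phi^{(4)}(t)\,dt .
\end{equation*}
It then suffices to show that $\int_0^1 p(t)\,\phi^{(4)}(t)\,dt$ equals $\frac{1}{b-a}\int_a^b f(x)\,dx - \frac16 f(a) - \frac23 f(\frac{a+b}{2}) - \frac16 f(b)$: multiplying through by $(b-a)$ and recognizing, via the substitution $x = ta+(1-t)b$, that $\int_0^1\phi(t)\,dt = \frac{1}{b-a}\int_a^b f(x)\,dx$, reproduces exactly the left-hand side of the lemma.

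Next I would carry out four successive integrations by parts, each time differentiating the kernel $p$ and lowering the order of the derivative on $\phi$. The key preliminary is to record the endpoint values and the regularity of $p$ and its derivatives at the junction $t=\frac12$. A direct computation shows $p(0)=p(1)=0$, $p'(0)=p'(1)=0$, and $p''(0)=p''(1)=0$, so every boundary term at $t=0$ and $t=1$ vanishes during the first three steps. Equally important, $p$, $p'$, and $p''$ are each continuous across $t=\frac12$ (one checks the one-sided limits agree), so the first three integrations by parts may be performed over the whole interval $[0,1]$ with no interior contribution. After these three steps one is left with $\int_0^1 p(t)\,\phi^{(4)}(t)\,dt = -\int_0^1 p'''(t)\,\phi'(t)\,dt$.

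The heart of the argument—and the step I expect to be the main obstacle—is the fourth and final integration by parts, because $p'''$ has a genuine jump at $t=\frac12$. Here I would split the integral as $\int_0^{1/2}+\int_{1/2}^1$ and integrate by parts on each piece separately, using that $p^{(4)}(t)=1$ on each subinterval. The boundary terms at the endpoints contribute $-\frac16\phi(0)$ and $-\frac16\phi(1)$, coming from $p'''(0)=-\frac16$ and $p'''(1)=\frac16$, while the two one-sided values at the midpoint, $p'''(1/2^-)=\frac13$ and $p'''(1/2^+)=-\frac13$, combine to give $-\frac23\phi(\frac12)$; the leftover integrals assemble as $\int_0^{1/2}\phi\,dt+\int_{1/2}^1\phi\,dt=\int_0^1\phi\,dt$. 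Translating back through $\phi(0)=f(b)$, $\phi(1)=f(a)$, $\phi(\frac12)=f(\frac{a+b}{2})$, and $\int_0^1\phi\,dt=\frac{1}{b-a}\int_a^b f\,dx$, and multiplying by $(b-a)$, yields precisely the claimed equality. The only genuine care required is the bookkeeping of the one-sided limits of $p'''$ at the midpoint—which is exactly what generates the midpoint weight $4f(\frac{a+b}{2})$—since everything else reduces to routine differentiation and evaluation.
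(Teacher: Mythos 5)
Your proposal is correct and follows essentially the same route as the paper's own proof: both verify the identity by integrating the kernel $p$ against $f^{(4)}\left( {ta + \left( {1 - t} \right)b} \right)$ by parts four times and then substituting $x = ta + \left( {1 - t} \right)b$. The only difference is organizational: the paper performs all four integrations by parts separately on $\left[ {0,\tfrac{1}{2}} \right]$ and $\left[ {\tfrac{1}{2},1} \right]$ with explicit boundary terms throughout, whereas you exploit the continuity of $p$, $p'$, $p''$ across $t = \tfrac{1}{2}$ (and their vanishing at $t = 0,1$) to work globally on $[0,1]$ for the first three steps, splitting at the midpoint only for the final step where $p'''$ jumps.
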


\begin{proof}
We note that
\begin{align*}
I &= \left( {b - a} \right)^5 \int_0^1 {p\left( t \right)f^{\left(
4 \right)} \left( {ta + \left( {1 - t} \right)b} \right)dt}
\\
&= \frac{{\left( {b - a} \right)^5 }}{{24}}\int_0^{1/2} {t^3
\left( {t - {\textstyle{2 \over 3}}} \right)f^{\left( 4 \right)}
\left( {ta + \left( {1 - t} \right)b} \right)dt}
\\
&\qquad+ \frac{{\left( {b - a} \right)^5 }}{{24}}\int_0^{1/2}
{\left( {t - 1} \right)^3 \left( {t - {\textstyle{1 \over 3}}}
\right)f^{\left( 4 \right)} \left( {ta + \left( {1 - t} \right)b}
\right)dt}.
\end{align*}
Integrating by parts, we get
\begin{align*}
I &= \left. {{\textstyle{1 \over {24}}}t^3 \left( {t -
{\textstyle{2 \over 3}}} \right)\frac{{\,f^{\left( 3 \right)}
\left( {ta + \left( {1 - t} \right)b} \right)}}{{a - b}}}
\right|_0^{1/2} - \left. {\left( {{\textstyle{1 \over 6}}t^3  -
{\textstyle{1 \over {12}}}t^2 } \right)\frac{{\,f''\left( {ta +
\left( {1 - t} \right)b} \right)}}{{\left( {a - b} \right)^2 }}}
\right|_0^{1/2}
\\
&\qquad+ \left. {\left( {{\textstyle{1 \over 2}}t^2  -
{\textstyle{1 \over 6}}t} \right)\frac{{\,f'\left( {ta + \left( {1
- t} \right)b} \right)}}{{\left( {a - b} \right)^3 }}}
\right|_0^{1/2} - \left. {\left( {t - {\textstyle{1 \over 6}}}
\right)\frac{{\,f\left( {ta + \left( {1 - t} \right)b}
\right)}}{{\left( {a - b} \right)^4 }}} \right|_0^{1/2}
\\
& \qquad+ \int_0^{1/2} {\frac{{\,f\left( {ta + \left( {1 - t}
\right)b} \right)}}{{\left( {a - b} \right)^4 }}dt} + \left.
{{\textstyle{1 \over {24}}}\left( {t - 1} \right)^3 \left( {t -
{\textstyle{1 \over 3}}} \right)\frac{{\,f^{\left( 3 \right)}
\left( {ta + \left( {1 - t} \right)b} \right)}}{{a - b}}}
\right|_{1/2}^1
\\
&\qquad - \left. {\left( {{\textstyle{1 \over 6}}t^3  -
{\textstyle{5 \over {12}}}t^2  + \,{\textstyle{1 \over 3}}t -
{\textstyle{1 \over {12}}}} \right)\frac{{\,f''\left( {ta + \left(
{1 - t} \right)b} \right)}}{{\left( {a - b} \right)^2 }}}
\right|_{1/2}^1
\\
&\qquad+ \left. {\left( {{\textstyle{1 \over 2}}t^2  -
{\textstyle{5 \over 6}}t + \,{\textstyle{1 \over 3}}}
\right)\frac{{\,f'\left( {ta + \left( {1 - t} \right)b}
\right)}}{{\left( {a - b} \right)^3 }}} \right|_{1/2}^1 - \left.
{\left( {t - {\textstyle{5 \over 6}}} \right)\frac{{\,f\left( {ta
+ \left( {1 - t} \right)b} \right)}}{{\left( {a - b} \right)^4 }}}
\right|_{1/2}^1
\\
&\qquad+ \int_{1/2}^1 {\frac{{\,f\left( {ta + \left( {1 - t}
\right)b} \right)}}{{\left( {a - b} \right)^4 }}dt}
\end{align*}
Setting $x = ta + \left( {1 - t} \right)b$, and $dx=(a-b)dt$,
gives
\begin{eqnarray*}
\left( {b - a} \right)^5 \cdot I = \int_a^b {f\left( x \right)dx}
- \frac{(b-a)}{6}\left[ {f\left( a \right) + 4f\left( {\frac{{a +
b}}{2}} \right) + f\left( b \right)} \right],
\end{eqnarray*}
which gives the desired representation (\ref{eq1}).
\end{proof}
Therefore, we may state our main result as follows:
\begin{theorem}
\label{t2.1}Let $f^ {\prime \prime \prime}:I\subseteq
\mathbb{R}\rightarrow \mathbb{R}$ be an absolutely continuous
mapping on $I^{\circ }$ such that $f^{(4)}\in L[a,b]$, where
$a,b\in I$ with $a<b$. If $\left\vert {f^{(4)}}\right\vert $ is
quasi-convex on $[a,b]$, then the following inequality holds:
\begin{multline}
\left\vert {\int_a^b {f\left( x \right)dx}- \frac{(b-a)}{6}\left[
{f\left( a \right) + 4f\left( {\frac{{a + b}}{2}} \right) +
f\left( b \right)} \right]}\right\vert  \label{eq2.2} \\
\leq \frac{{\left( {b - a} \right)^5 }}{{5760}}\left[ {\sup
\left\{ {\left| {f^{\left( {4} \right)} \left( a \right)}
\right|,\left| {f^{\left( {4} \right)} \left( {\frac{{a + b}}{2}}
\right)} \right|} \right\} + \sup \left\{ {\left| {f^{\left( {4}
\right)} \left( {\frac{{a + b}}{2}} \right)} \right|,\left|
{f^{\left( {4} \right)} \left( b \right)} \right|} \right\}}
\right].
\end{multline}
\end{theorem}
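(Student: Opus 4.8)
The plan is to start from the integral identity established in Lemma \ref{lma1}, which rewrites the left-hand side of (\ref{eq2.2}) as $\left(b-a\right)^5\int_0^1 p(t)\,f^{(4)}(ta+(1-t)b)\,dt$. Passing the modulus through the integral by the triangle inequality, the task reduces to bounding $\int_0^1 |p(t)|\,|f^{(4)}(ta+(1-t)b)|\,dt$, and the natural move is to split this integral at $t=\tfrac12$ so as to match the two branches of the piecewise kernel $p$.

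The geometric observation that drives the estimate is that on each piece the argument $ta+(1-t)b$ traces a segment between two of the three Simpson nodes. First I would note the convex-combination identity $ta+(1-t)b = 2t\cdot\frac{a+b}{2} + (1-2t)\cdot b$, so that for $t\in[0,\tfrac12]$ the point $ta+(1-t)b$ is a convex combination of $\frac{a+b}{2}$ and $b$; quasi-convexity of $|f^{(4)}|$ then gives $|f^{(4)}(ta+(1-t)b)| \le \sup\{|f^{(4)}(\frac{a+b}{2})|,\,|f^{(4)}(b)|\}$. Symmetrically, for $t\in(\tfrac12,1]$ the argument is a convex combination of $a$ and $\frac{a+b}{2}$, yielding the bound $\sup\{|f^{(4)}(a)|,\,|f^{(4)}(\frac{a+b}{2})|\}$. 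These two suprema are exactly the constants appearing on the right of (\ref{eq2.2}), and since they are independent of $t$ they pull out of their respective integrals.

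What remains is to evaluate the two weight integrals $\int_0^{1/2}|p(t)|\,dt$ and $\int_{1/2}^1|p(t)|\,dt$. Before integrating I would check the sign of $p$: on $[0,\tfrac12]$ one has $t-\tfrac23<0$ and $t^3\ge0$, while on $(\tfrac12,1]$ one has $(t-1)^3\le0$ and $t-\tfrac13>0$, so $p(t)\le0$ throughout $[0,1]$ and hence $|p(t)|=-p(t)$ on each piece. A direct computation then gives $\int_0^{1/2}|p(t)|\,dt = \frac{1}{5760}$, and the substitution $t\mapsto 1-t$ maps the second integral onto the first, so it equals $\frac{1}{5760}$ as well. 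Assembling the pieces produces precisely the factor $\frac{(b-a)^5}{5760}$ multiplying the sum of the two suprema, which is (\ref{eq2.2}).

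I expect no genuine obstacle here: the argument is a clean application of Lemma \ref{lma1} followed by the triangle inequality and quasi-convexity. The only points requiring care are the sign analysis of $p$ on each subinterval (so that $|p|=-p$) and the bookkeeping of the two elementary polynomial integrals. The one conceptual step worth isolating is recognizing the convex-combination structure of $ta+(1-t)b$ on each half of $[0,1]$, so that quasi-convexity is invoked with the correct pair of endpoints $\{\frac{a+b}{2},b\}$ and $\{a,\frac{a+b}{2}\}$ respectively.
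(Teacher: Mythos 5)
Your proposal is correct and follows essentially the same route as the paper's own proof: Lemma \ref{lma1}, the triangle inequality, splitting the integral at $t=\tfrac12$, applying quasi-convexity of $|f^{(4)}|$ on each half with the endpoint pairs $\left\{\tfrac{a+b}{2},b\right\}$ and $\left\{a,\tfrac{a+b}{2}\right\}$, and evaluating the two kernel integrals, each equal to $\tfrac{1}{5760}$. Your explicit convex-combination identities and the symmetry substitution $t\mapsto 1-t$ for the second integral are merely cleaner bookkeeping of steps the paper leaves implicit, not a different argument.
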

\begin{proof}
From  Lemma \ref{lma1}, and since $f$ is quasi-convex, we have
\begin{align*}
&\left\vert {\int_a^b {f\left( x \right)dx}- \frac{(b-a)}{6}\left[
{f\left( a \right) + 4f\left( {\frac{{a + b}}{2}} \right) +
f\left( b \right)} \right]}\right\vert
\\
&= \left| {\left( {b - a} \right)^5 \int_0^1 {p\left( t
\right)f^{\left( {4} \right)} \left( {ta + \left( {1 - t}
\right)b} \right)dt} } \right|
\\
&\le \left( {b - a} \right)^5 \int_0^1 {\left| {p\left( t \right)}
\right|\left| {f^{\left( {4} \right)} \left( {ta + \left( {1 - t}
\right)b} \right)} \right|dt}
\\
&=\left( {b - a} \right)^5 \int_0^{1/2} {\left| {p\left( t
\right)} \right|\left| {f^{\left( {4} \right)} \left( {ta + \left(
{1 - t} \right)b} \right)} \right|dt}
\\
&\qquad+ \left( {b - a} \right)^5 \int_{1/2}^1 {\left| {p\left( t
\right)} \right|\left| {f^{\left( {4} \right)} \left( {ta + \left(
{1 - t} \right)b} \right)} \right|dt}
\\
&\le \left( {b - a} \right)^5 \int_0^{1/2} {\left| {{\textstyle{1
\over {24}}}t^3 \left( {t - {\textstyle{2 \over 3}}} \right)}
\right|\cdot\sup \left\{ {\left| {f^{\left( {4} \right)} \left( b
\right)} \right|,\left| {f^{\left( {4} \right)} \left( {\frac{{a +
b}}{2}} \right)} \right|} \right\}dt}
\\
&\qquad+ \left( {b - a} \right)^5 \int_{1/2}^1 {\left|
{{\textstyle{1 \over {24}}}\left( {t - 1} \right)^3 \left( {t -
{\textstyle{1 \over 3}}} \right)} \right| \cdot \sup \left\{
{\left| {f^{\left( {4} \right)} \left( {\frac{{a + b}}{2}}
\right)} \right|,\left| {f^{\left( {4} \right)} \left( a \right)}
\right|} \right\}dt}
\\
&=\frac{{\left( {b - a} \right)^5 }}{{5760}} \left[ {\sup \left\{
{\left| {f^{\left( {4} \right)} \left( a \right)} \right|,\left|
{f^{\left( {4} \right)} \left( {\frac{{a + b}}{2}} \right)}
\right|} \right\} + \sup \left\{ {\left| {f^{\left( {4} \right)}
\left( {\frac{{a + b}}{2}} \right)} \right|,\left| {f^{\left( {4}
\right)} \left( b \right)} \right|} \right\}} \right],
\end{align*}
which completes the proof.
\end{proof}

\begin{corollary}
Let $f$ as in Theorem \ref{t2.1}.
\begin{enumerate}
\item If $f$ is decreasing, then we have
\begin{multline}
\left\vert {\int_a^b {f\left( x \right)dx}- \frac{(b-a)}{6}\left[
{f\left( a \right) + 4f\left( {\frac{{a + b}}{2}} \right) +
f\left( b \right)} \right]}\right\vert  \label{eq2.4} \\
\leq \frac{{\left( {b - a} \right)^5 }}{{5760}}\left[ {\left|
{f^{\left( {4} \right)} \left( a \right)} \right| + \left|
{f^{\left( {4} \right)} \left( {\frac{{a + b}}{2}} \right)}
\right|} \right].
\end{multline}
\item If $f$ is increasing, then we have
\begin{multline}
\left\vert {\int_a^b {f\left( x \right)dx}- \frac{(b-a)}{6}\left[
{f\left( a \right) + 4f\left( {\frac{{a + b}}{2}} \right) +
f\left( b \right)} \right]}\right\vert  \label{eq2.3} \\
\leq \frac{{\left( {b - a} \right)^5 }}{{5760}}\left[ {\left|
{f^{\left( {4} \right)} \left( {\frac{{a + b}}{2}} \right)}
\right| + \left| {f^{\left( {4} \right)} \left( b \right)}
\right|} \right].
\end{multline}
\end{enumerate}
\end{corollary}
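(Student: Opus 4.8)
The plan is to obtain both inequalities as direct specializations of Theorem \ref{t2.1}, the only new ingredient being monotonicity, which lets me evaluate at definite endpoints the two suprema appearing on the right of (\ref{eq2.2}). Abbreviate those suprema by $S_1 = \sup\{|f^{(4)}(a)|, |f^{(4)}(\tfrac{a+b}{2})|\}$ and $S_2 = \sup\{|f^{(4)}(\tfrac{a+b}{2})|, |f^{(4)}(b)|\}$, so that Theorem \ref{t2.1} bounds the quantity on the left of (\ref{eq2.4}) and (\ref{eq2.3}) by $\frac{(b-a)^5}{5760}(S_1+S_2)$. The single geometric fact I will exploit is the strict ordering $a < \tfrac{a+b}{2} < b$ of the three points at which $f^{(4)}$ is sampled.

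For part (1), I read the hypothesis as $\left|f^{(4)}\right|$ being decreasing on $[a,b]$; combined with the ordering above this gives $\left|f^{(4)}(a)\right| \ge \left|f^{(4)}(\tfrac{a+b}{2})\right| \ge \left|f^{(4)}(b)\right|$. Consequently each supremum is attained at its \emph{left} argument, so $S_1 = \left|f^{(4)}(a)\right|$ and $S_2 = \left|f^{(4)}(\tfrac{a+b}{2})\right|$; substituting these into $\frac{(b-a)^5}{5760}(S_1+S_2)$ reproduces (\ref{eq2.4}) verbatim.

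For part (2), with $\left|f^{(4)}\right|$ increasing, the same ordering reverses all the inequalities, $\left|f^{(4)}(a)\right| \le \left|f^{(4)}(\tfrac{a+b}{2})\right| \le \left|f^{(4)}(b)\right|$, so each supremum is now attained at its \emph{right} argument: $S_1 = \left|f^{(4)}(\tfrac{a+b}{2})\right|$ and $S_2 = \left|f^{(4)}(b)\right|$, which yields (\ref{eq2.3}).

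I do not expect a genuine obstacle, since the argument is a one-line reduction once Theorem \ref{t2.1} is in hand. The only point demanding care is interpretive rather than computational: the monotonicity hypothesis must be applied to $\left|f^{(4)}\right|$, the quantity that actually enters the suprema, rather than to $f$ itself, because the monotonicity of $f$ alone carries no information about the size of its fourth derivative at the three sample points. With that reading fixed, the direction of monotonicity dictates at which endpoint of each pair the supremum is realized, and the two displayed corollary bounds follow immediately.
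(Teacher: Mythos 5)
Your proposal is correct and coincides with the paper's intended argument: the paper states this corollary without proof precisely because it is the immediate specialization of Theorem \ref{t2.1} you describe, with monotonicity selecting the larger element of each supremum in (\ref{eq2.2}). Your interpretive remark is also well taken — the hypothesis ``$f$ is decreasing/increasing'' must indeed be read as applying to $\left\vert f^{(4)}\right\vert$ (a looseness in the paper's statement), since that is the only reading under which the suprema collapse to the displayed endpoint values.
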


\begin{example}
Let $f:[0,1] \to \mathbb{R}$ be given by $f(x)=\exp(x^2)$. Then $
f\left( 0 \right) = 1$, $f\left( 1 \right) =
{\rm{2}}{\rm{.718281828}}$, $f\left( {\frac{1}{2}} \right) =
{\rm{1}}{\rm{.284025417}}$, $f^{\left( 4 \right)} \left( 1 \right)
= {\rm{206}}{\rm{.5894189,}}$ $f^{\left( 4 \right)} \left(
{\frac{1}{2}} \right) = {\rm{32}}{\rm{.10063542}}$, $\left\|
{f^{\left( 4 \right)} } \right\|_\infty   =
{\rm{206}}{\rm{.5894189}} $  and
 $\int_{\rm{0}}^{\rm{1}} {{\rm{exp}}\left( {{\rm{x}}^{\rm{2}} } \right)dx}  = {\rm{1}}{\rm{.462651746}}
 $.
Now, the error obtained by applying (\ref{eq1.1}) is
$0.07173243712$. However, by applying (\ref{eq2.3}) the error is
$0.03586621856$, which means that (\ref{eq2.3}) is better than
(\ref{eq1.1}).
\end{example}

\begin{corollary}
Let $f$ as in Theorem \ref{t2.1}. If $f^{(4)}$ is exits,
continuous and $ \left\| {f^{\left( 4 \right)} } \right\|_\infty
: = \mathop {\sup }\limits_{x \in \left( {a,b} \right)} \left|
{f^{\left( 4 \right)} \left( x \right)} \right| < \infty$, then
the inequality (\ref{eq2.2}) reduced to (\ref{eq1.1}).
\end{corollary}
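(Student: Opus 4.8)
The plan is to majorize the two supremum terms on the right-hand side of (\ref{eq2.2}) by the single uniform norm $\|f^{(4)}\|_\infty$ and then to check that the constant collapses exactly to the classical one. First I would invoke continuity: since $f^{(4)}$ exists and is continuous on $[a,b]$, its supremum over the open interval $(a,b)$ equals $\max_{x\in[a,b]}|f^{(4)}(x)|$, so each sampled value $|f^{(4)}(a)|$, $\left|f^{(4)}\!\left(\tfrac{a+b}{2}\right)\right|$, $|f^{(4)}(b)|$ is at most $\|f^{(4)}\|_\infty$. The sole reason the hypothesis demands continuity (rather than mere boundedness on the open interval) is to license this bound at the endpoints $a$ and $b$; the midpoint $\tfrac{a+b}{2}$ is interior and needs no such argument.

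Consequently each of the two maxima appearing in (\ref{eq2.2}) is $\le \|f^{(4)}\|_\infty$, so their sum is $\le 2\|f^{(4)}\|_\infty$. Substituting into the right-hand side of (\ref{eq2.2}) I would then obtain
\[
\frac{(b-a)^5}{5760}\bigl[\|f^{(4)}\|_\infty + \|f^{(4)}\|_\infty\bigr] = \frac{(b-a)^5}{2880}\|f^{(4)}\|_\infty,
\]
using the arithmetic identity $2/5760 = 1/2880$. Chaining this estimate with (\ref{eq2.2}) reproduces (\ref{eq1.1}) verbatim, which is precisely the asserted reduction.

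There is no genuine obstacle in this argument: it amounts to a one-line majorization together with the identity $2/5760 = 1/2880$. The only point deserving a word of care is the passage from the supremum over $(a,b)$ to the values at the endpoints, which is handled by continuity as above. It is worth noting that the same chain of inequalities shows (\ref{eq2.2}) is never worse than the classical estimate (\ref{eq1.1}), since each of the two suprema may be strictly smaller than $\|f^{(4)}\|_\infty$.
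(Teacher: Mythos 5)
Your proof is correct and is precisely the argument the paper intends: the corollary is stated there without proof, and the evident reasoning is exactly your majorization of both suprema in (\ref{eq2.2}) by $\left\| f^{(4)} \right\|_\infty$ followed by the identity $2/5760 = 1/2880$, which reproduces the constant in (\ref{eq1.1}). Your remark that continuity is what licenses bounding the endpoint values $\left| f^{(4)}(a) \right|$ and $\left| f^{(4)}(b) \right|$ by the supremum taken over the open interval $(a,b)$ is a careful touch the paper does not spell out.
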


\begin{remark}
1- We note that the constants in (\ref{eq1.1}) and (\ref{eq1.2})
are improved.
\newline

2- The corresponding version of the inequality (\ref{eq2.2}) for
powers may be done by applying the H\"{o}lder inequality and the
power mean inequality.
\end{remark}

\section{Applications to Simpson's Formula}
Let $d$ be a division of the interval $[a, b]$, i.e., $d : a = x_0
< x_1 < ... < x_{n-1} < x_n = b$, $h_i = {{\left( {x_{i+1} - x_i}
\right)} \mathord{\left/
 {\vphantom {{\left( {b - a} \right)} 2}} \right.
 \kern-\nulldelimiterspace} 2}$ and consider the Simpson's formula
\begin{align}
\label{eq4.1} S\left( {f,d} \right) = \sum\limits_{i = 0}^{n - 1}
{\frac{{f\left( {x_i } \right) + 4f\left( {x_i  + h_i } \right) +
f\left( {x_{i + 1} } \right)}}{6}\left( {x_{i + 1}  - x_i }
\right)}.
\end{align}
It is well known that if the mapping $f : [a,b] \to \textbf{R}$,
is differentiable such that $ f^{\left( 4 \right)} \left( x
\right)$ exists on $(a,b)$ and $M = \mathop {\sup }\limits_{x \in
\left( {a,b} \right)} \left| {f^{\left( 4 \right)} \left( x
\right)} \right| < \infty$, then
\begin{align}
\label{eq4.2}I = \int\limits_a^b {f\left( x \right)dx} = S\left(
{f,d} \right) + E_S\left( {f,d} \right),
\end{align}
where the approximation error $E_S\left( {f,d} \right)$ of the
integral $I$ by the Simpson's formula $ S\left( {f,d} \right)$
satisfies
\begin{align}
\label{eq4.3}\left| {E_S\left( {f,d} \right)} \right| \le
\frac{M}{{2880}}\sum\limits_{i = 0}^{n - 1} {\left( {x_{i + 1}  -
x_i } \right)^5 }.
\end{align}
In the following we give a new estimation for the remainder term
$E_S\left( {f,d} \right)$.
\begin{proposition}
\label{pro1} Let $f^{\prime \prime \prime}:I\subseteq
\mathbb{R}\rightarrow \mathbb{R}$ be an absolutely continuous
mapping on $I^{\circ }$ such that $f^{(4) }\in L[a,b]$, where
$a,b\in I$ with $a<b$. If $\left\vert {f^{(4) }}\right\vert $ is
quasi-convex on $[a,b]$, then in (\ref{eq4.2}), for every division
$d$ of $[a,b]$, the following holds:
\begin{multline*}
\left| {E_S\left( {f,d} \right)} \right| \le \frac{1}{{5760}}
\sum\limits_{i = 0}^{n - 1} {\left( {x_{i + 1}  - x_i } \right)^5
\left[ {\sup \left\{ {f^{\left( 4 \right)} \left( {x_i }
\right),f^{\left( 4 \right)} \left( {\frac{{x_i  + x_{i + 1}
}}{2}} \right)} \right\}} \right.}
\\
\left. { + \sup \left\{ {f^{\left( 4 \right)} \left( {\frac{{x_i +
x_{i + 1} }}{2}} \right),f^{\left( 4 \right)} \left( {x_{i + 1} }
\right)} \right\}} \right].
\end{multline*}
\end{proposition}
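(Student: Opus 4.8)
The plan is to obtain the proposition by applying Theorem \ref{t2.1} on each subinterval of the division $d$ and then summing the resulting local estimates. First I would rewrite the remainder: from \eqref{eq4.2} we have $E_S(f,d)=\int_a^b f(x)\,dx - S(f,d)$. Since the integral is additive over the partition $a=x_0<x_1<\cdots<x_n=b$ while $S(f,d)$ is by definition a sum over the same subintervals, and since $x_i+h_i=(x_i+x_{i+1})/2$, I would split
\begin{multline*}
E_S(f,d) = \sum_{i=0}^{n-1}\Big[\int_{x_i}^{x_{i+1}} f(x)\,dx\\
- \frac{x_{i+1}-x_i}{6}\left(f(x_i)+4f\left(\frac{x_i+x_{i+1}}{2}\right)+f(x_{i+1})\right)\Big].
\end{multline*}
Each summand is precisely the quantity estimated in Theorem \ref{t2.1}, now with $[x_i,x_{i+1}]$ playing the role of $[a,b]$.

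Next I would verify that the hypotheses of Theorem \ref{t2.1} transfer to each piece, so the theorem may be invoked term by term. The absolute continuity of $f'''$ and the integrability $f^{(4)}\in L[x_i,x_{i+1}]$ are immediate, since both hold on all of $[a,b]$. The only point deserving a word of justification is that $\left|f^{(4)}\right|$ is quasi-convex on each $[x_i,x_{i+1}]$: this follows because the restriction of a quasi-convex function to a subinterval is again quasi-convex, the defining inequality involving only points $x,y$ of the subinterval and $\lambda\in[0,1]$. Applying the triangle inequality to the displayed decomposition and then Theorem \ref{t2.1} to each term yields
\begin{multline*}
\left|E_S(f,d)\right| \le \frac{1}{5760}\sum_{i=0}^{n-1}(x_{i+1}-x_i)^5\Big[\sup\left\{\left|f^{(4)}(x_i)\right|,\left|f^{(4)}\left(\tfrac{x_i+x_{i+1}}{2}\right)\right|\right\}\\
+ \sup\left\{\left|f^{(4)}\left(\tfrac{x_i+x_{i+1}}{2}\right)\right|,\left|f^{(4)}(x_{i+1})\right|\right\}\Big],
\end{multline*}
which is the asserted bound.

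There is essentially no hard step: the proposition is a routine globalization of Theorem \ref{t2.1}, obtained by additivity of the integral together with the triangle inequality, and the substantive content was already carried out in the single-interval theorem. The only genuinely new observation is the stability of quasi-convexity under restriction to a subinterval, which makes the per-term application legitimate. I would also note in passing that the suprema in the statement should be read as suprema of $\left|f^{(4)}\right|$ rather than of $f^{(4)}$ itself, in order to match the local bound supplied by Theorem \ref{t2.1}.
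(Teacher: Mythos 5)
Your proof is correct and takes essentially the same route as the paper: apply Theorem \ref{t2.1} on each subinterval $[x_i,x_{i+1}]$ of the division, sum over $i$, and use the triangle inequality, with the (easily checked) fact that quasi-convexity of $\left|f^{(4)}\right|$ is inherited on subintervals. Your closing remark is also accurate --- the suprema in the stated proposition should be read with absolute values, $\left|f^{(4)}\right|$, exactly as they appear in the paper's own proof.
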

\begin{proof}
Applying Theorem \ref{t2.1} on the subintervals $[x_i,x_{i+1}]$,
$(i = 0,1, ..., n-1)$ of the division $d$, we get
\begin{multline*}
\left| {\int\limits_{x_i }^{x_{i + 1} } {f\left( x
\right)dx}-\frac{{\left( {x_{i + 1}  - x_i } \right)}}{6}\left[
{f\left( {x_i } \right) + 4f\left( {\frac{{x_i  + x_{i + 1} }}{2}}
\right) + f\left( {x_{i + 1} } \right)} \right]} \right|
\\
\le \left( {x_{i + 1}  - x_i } \right)^5 \left[ {\sup \left\{
{\left| {f^{\left( 4 \right)} \left( {x_i } \right)}
\right|,\left| {f^{\left( 4 \right)} \left( {\frac{{x_i  + x_{i +
1} }}{2}} \right)} \right|} \right\}} \right.
\\
\left. { + \sup \left\{ {\left| {f^{\left( 4 \right)} \left(
{\frac{{x_i  + x_{i + 1} }}{2}} \right)} \right|,\left| {f^{\left(
4 \right)} \left( {x_{i + 1} } \right)} \right|} \right\}} \right]
\end{multline*}
Summing over $i$ from $0$ to $n-1$ and taking into account that
$\left| {f^{(4)}} \right|$ is quasi-convex, we deduce that
\begin{multline*}
\left| {\int\limits_a^b {f\left( x \right)dx}-S\left( {f,d}
\right)} \right| \le \frac{1}{{5760}} \sum\limits_{i = 0}^{n - 1}
{\left( {x_{i + 1}  - x_i } \right)^5 \left[ {\sup \left\{ {\left|
{f^{\left( 4 \right)} \left( {x_i } \right)} \right|, \left|
{f^{\left( 4 \right)} \left( {\frac{{x_i  + x_{i + 1} }}{2}}
\right)} \right|} \right\}} \right.}
\\
\left. { + \sup \left\{ {\left| {f^{\left( 4 \right)} \left(
{\frac{{x_i + x_{i + 1} }}{2}} \right)} \right|,\left| {f^{\left(
4 \right)} \left( {x_{i + 1} } \right)} \right|} \right\}}
\right].
\end{multline*}
which completes the proof.
\end{proof}

\section{conclusion}
For fourth times continuously differentiable mapping $f$on $\left(
{a,b} \right)$ and $ \left\| {f^{\left( 4 \right)} }
\right\|_\infty  : = \mathop {\sup }\limits_{x \in \left( {a,b}
\right)} \left| {f^{\left( 4 \right)} \left( x \right)} \right| <
\infty$, the classical Simpson's inequality holds. In this paper,
we relax the conditions on Simpson's inequality; namely, the
proposed inequality (\ref{eq2.2}) holds if $f^ {\prime \prime
\prime}:I\subseteq \mathbb{R}\rightarrow \mathbb{R}$ is an
absolutely continuous mapping on $I^{\circ }$ such that $f^{\prime
\prime \prime}\in L[a,b]$ and $\left\vert {f^{(4)}}\right\vert $
is quasi-convex on $[a,b]$. In general, the inequality
(\ref{eq2.2}) is better than the classical Simpson's inequality
(\ref{eq1.1}) and the result(s) in Theorem \ref{thm1.1}, where we
give an example shows that there exist a quasi-convex mapping
which is not of bounded variation.

\centerline{}

\centerline{}


\end{document}